\documentclass[12pt,a4paper]{amsart}
\usepackage{amsmath}
\usepackage{amssymb}
\usepackage{tikz}
\usetikzlibrary{decorations.pathreplacing}
\usepackage[colorlinks=true]{hyperref}
\usepackage[a4paper,portrait]{geometry}
\usepackage{color}
\usepackage{longtable}
\usepackage{caption}
\usepackage{subcaption}
\usepackage[abbrev,nobysame,non-sorted-cites]{amsrefs}
\usepackage{bm}
\usepackage{comment}

\allowdisplaybreaks[3]

\newcommand{\Z}{\mathbb{Z}}
\newcommand{\Q}{\mathbb{Q}}

\DeclareMathOperator{\Gal}{Gal}

\newtheorem{theorem}{Theorem}
\newtheorem*{theorem*}{Theorem}

\newtheorem{lemma}[theorem]{Lemma}

\newtheorem*{corollary*}{Corollary}

\newtheorem{conjecture}[theorem]{Conjecture}

\theoremstyle{definition}

\theoremstyle{remark}

\newtheorem{remark}[theorem]{Remark}
\newtheorem{question}[theorem]{Question}

\begin{document}

\author[Xuejun Guo]{Xuejun Guo}

\address{School of Mathematics, Nanjing University, Nanjing 210093, People's Republic of China}

\email{guoxj@nju.edu.cn}

\author[Zhengyu Tao]{Zhengyu Tao}

\address{School of Mathematics, Hefei University of Technology, Hefei 230009, People's Republic of China}

\email{taozhy@hfut.edu.cn}

\title[Counterexamples to Stanley's conjecture on dimer coverings]{Counterexamples to Stanley's conjecture on dimer coverings}

\date{\today}

\subjclass[2020]{Primary 05A15, 05B45; Secondary 11B39, 11R18}

\keywords{dimer coverings, Stanley's conjecture, Pell numbers}

\thanks{}

\begin{abstract}
Let $Q_k(x)$ be Stanley's explicit denominator for the dimer-covering generating function $F_k(x)=\sum_{n\ge0}A_{k,n}x^n$ of $k\times n$ rectangles. Stanley conjectured in 1985 that $Q_k(x)$ has only simple roots; this longstanding conjecture was recently recorded in Lai's list of open problems on tilings (see~\cite{Lai24}*{Problem 33}). We disprove the conjecture by proving that $Q_{14h-1}(x)$ and $Q_{30h-1}(x)$ have repeated roots for every $h\ge1$; in particular, $k=13$ is the smallest counterexample. The construction comes from two exceptional multiplicative identities among trigonometric algebraic units. We further propose a conjecture concerning this class of trigonometric identities, which appears to be related to Robinson's problem on primitive Pell factors.
\end{abstract}

\maketitle

\section{Introduction} \label{intro}

Let $k$ be a positive integer and $A_{k,n}$ denote the number of ways to tile a $k\times n$ rectangle with $1\times 2$ rectangles (usually called dimers or dominoes). For a fixed $k$, it is a well-known fact that the generating function
\[
  F_k(x)=\sum_{n\geq 0} A_{k,n}x^n
\]
of $\{A_{k,n}\}_{n\ge 0}$ can be written as a quotient of two polynomials with integer coefficients, namely, a rational function (see, for example, \cite{KP80}).

In~\cite{Sta85}, Stanley used the exact product formula for $A_{k,n}$ by Kasteleyn, Temperley-Fisher, and Lieb \cites{Kas61, TF61,Lie67} to explicitly compute a pair of polynomials $P_k(x),Q_k(x)\in\Z[x]$ for each $k$ such that $F_k(x)=P_k(x)/Q_k(x)$ and $Q_k(0)=1$. We record Stanley's formula for the denominator as follows: let $\ell=\left\lfloor\frac{k}{2}\right\rfloor,$
\begin{equation}\label{eq:cjej}
  c_j=\cos\frac{j\pi}{k+1}+\sqrt{1+\cos^2\frac{j\pi}{k+1}},
  \qquad
  \bar{c}_j=\cos\frac{j\pi}{k+1}-\sqrt{1+\cos^2\frac{j\pi}{k+1}}.
\end{equation}
Clearly, we have $c_j\bar{c}_j=-1$. For each subset $S$ of $\{1,\cdots,\ell\}$, define
\begin{equation}\label{eq:aS}
  a_S=\Big(\prod_{j\in S}c_j\Big)\Big(\prod_{j\in\{1,\dots,\ell\}\setminus S}\bar{c}_j\Big),
\end{equation}
where an empty product, if it occurs, is understood to be \(1\).
 Then
\begin{equation}\label{eq:defofQk}
  Q_k(x):=
  \begin{cases}
  \displaystyle\prod_{S\subseteq \{1,\ldots,\ell\}}(1-a_S x), & k \text{ even},\\[1.2em]
  \displaystyle\prod_{S\subseteq \{1,\ldots,\ell\}}(1-a_S^2 x^2), & k \text{ odd}.
\end{cases}
\end{equation}
At the end of \cite{Sta85}, Stanley stated the following conjecture:
\begin{conjecture}[Stanley]\label{conj:Stanley}
The polynomial $Q_k(x)$ has distinct roots.
\end{conjecture}

\begin{remark}\label{rem:Stanley}
Stanley pointed out in~\cite{Sta85} that Conjecture~\ref{conj:Stanley} is equivalent to the assertion that
\(2^{\lfloor (k+1)/2\rfloor}\) is the minimum order of a linear recurrence satisfied by \(A_{k,n}\). This is also equivalent to the assertion that the polynomials \(P_k(x)\) and \(Q_k(x)\) are relatively prime.
 Stanley also proved in~\cite{Sta85} that if $F_k(x)=P_k(x)/Q_k(x)$ is reduced to lowest term, then the denominator has distinct roots. Conjecture~\ref{conj:Stanley} is also collected in~\cite{Lai24} as Problem 33.
\end{remark}

The purpose of this note is to show that Stanley's conjecture, in the form stated above, is false. The smallest counterexample is $k=13$. In fact, we can construct two infinite families of  counterexamples to Conjecture~\ref{conj:Stanley} (see Theorem~\ref{thm:mainresult}). This note is organized as follows. In Section~\ref{sec:mainresult}, we first establish a simple criterion (Lemma~\ref{lem:criterion}) for the polynomial $Q_k(x)$ to have repeated roots. Then, with the help of two elegant trigonometric identities, we prove Theorem~\ref{thm:mainresult}. In Section~\ref{sec:finalremarks}, we revise Stanley's original conjecture to Conjecture~\ref{conj:revised_conj}. Some connections with Robinson's conjecture are also discussed.

\section{Main results}\label{sec:mainresult}

Let $k\ge 1,\ell=\left\lfloor\frac{k}{2}\right\rfloor$, and let $c_j,\bar{c}_j$ be defined as in \eqref{eq:cjej}. For each subset $S$ of $\{1,\ldots,\ell\}$, define $a_S$ as in~\eqref{eq:aS} and set $b_S=\prod_{j\in S}c_j$. Since $c_j\bar{c}_j=-1$, we have $\displaystyle a_S=(-1)^{\ell-|S|}\frac{b_S^2}{b_{\{1,\dots,\ell\}}}$. According to the definition~\eqref{eq:defofQk} of $Q_k(x)$, we clearly have:
\begin{lemma}\label{lem:criterion}
 With the notation above:
  \begin{itemize}
		\item [(i)]  if $k$ is odd, then $Q_k(x)$ has repeated roots if and only if there exist  distinct subsets $S,T$ of $\{1,\dots,\ell\}$ such that $b_S=b_T$;
    \item [(ii)] if $k$ is even, then $Q_k(x)$ has repeated roots if and only if there exist  distinct subsets $S,T$ of $\{1,\dots,\ell\}$ such that $b_S=b_T$ and $|S|\equiv|T|\pmod 2$.
	\end{itemize}
\end{lemma}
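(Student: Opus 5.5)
The plan is to read the roots of $Q_k(x)$ directly off the factorization \eqref{eq:defofQk}. First I will check that every $a_S$ is nonzero. Since $c_j>0$ and $\bar c_j<0$ with $c_j\bar c_j=-1$, each $a_S$ is a nonzero real number and $b_S>0$. So the roots of $Q_k$ are exactly the reciprocals of the factor parameters, and a repeated root means two factors coincide.

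For $k$ even, the roots are $1/a_S$. So $Q_k$ has a repeated root iff $a_S=a_T$ for some $S\neq T$. By the displayed relation $a_S=(-1)^{\ell-|S|}b_S^2/b_{\{1,\dots,\ell\}}$, the condition $a_S=a_T$ is equivalent to
\[
(-1)^{|S|}b_S^2=(-1)^{|T|}b_T^2 .
\]
Both $b_S^2$ and $b_T^2$ are positive, so this forces the signs to agree, i.e.\ $|S|\equiv|T|\pmod 2$, and then $b_S^2=b_T^2$. Positivity of $b_S,b_T$ turns this into $b_S=b_T$. Conversely, these two conditions clearly give $a_S=a_T$. This proves (ii).

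For $k$ odd, each factor $1-a_S^2x^2=(1-a_Sx)(1+a_Sx)$ has the two simple roots $\pm1/a_S$. So $Q_k$ has a repeated root iff $\{\pm a_S\}$ and $\{\pm a_T\}$ intersect for some $S\ne T$, that is, iff $a_S^2=a_T^2$. A single factor never contributes a double root, because $a_S\ne0$. Now $a_S^2=b_S^4/b_{\{1,\dots,\ell\}}^2$, so the condition becomes $b_S^4=b_T^4$. Positivity gives $b_S=b_T$, which proves (i).

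There is no real obstacle here. The only points needing care are the positivity of $c_j$ (because $\sqrt{1+\cos^2}>|\cos|$), which is used to take roots, and the fact that in the odd case the $\pm$ pairing removes the parity condition.
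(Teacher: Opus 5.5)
Your proof is correct and follows the same route as the paper, which treats the lemma as immediate from the definition of $Q_k(x)$ and the relation $a_S=(-1)^{\ell-|S|}b_S^2/b_{\{1,\dots,\ell\}}$. You also make explicit the positivity of $b_S$ (via $\sqrt{1+\cos^2\theta}>|\cos\theta|$) and the fact that $a_S\neq 0$, which the paper leaves implicit in its ``clearly.''
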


The two families of counterexamples to Conjecture~\ref{conj:Stanley} are:

\begin{theorem}\label{thm:mainresult}
For every integer $h\geq 1$, the polynomials $Q_{14h-1}(x)$ and $Q_{30h-1}(x)$ have repeated roots.
\end{theorem}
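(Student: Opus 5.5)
The plan is to reduce both infinite families to the single base cases $k=13$ and $k=29$, and then settle those by an explicit multiplicative identity together with Lemma~\ref{lem:criterion}(i).

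\emph{Reduction to $h=1$.} For $k=14h-1$ we have $k+1=14h$ and $\ell=7h-1\ge 6h$. For $m=1,\dots,6$ the index $j=hm$ lies in $\{1,\dots,\ell\}$ and satisfies $\frac{j\pi}{k+1}=\frac{m\pi}{14}$. Hence $c_{hm}$ for $k=14h-1$ coincides with $c_m$ for $k=13$. Similarly, for $k=30h-1$ we have $\ell=15h-1\ge 14h$, and $c_{hm}$ with $1\le m\le 14$ coincides with $c_m$ for $k=29$.

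Suppose we exhibit distinct $S_0,T_0\subseteq\{1,\dots,6\}$ with $\prod_{m\in S_0}c_m=\prod_{m\in T_0}c_m$ at $k=13$. Then $S=hS_0$ and $T=hT_0$ are distinct subsets of $\{1,\dots,7h-1\}$ with $b_S=b_T$. Since $k$ is odd, Lemma~\ref{lem:criterion}(i) gives repeated roots of $Q_{14h-1}$. The same argument works for $30h-1$ once we have a pair $S_0,T_0\subseteq\{1,\dots,14\}$ at $k=29$. No parity condition is needed because $k$ is odd in both families.

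\emph{Finding the identities.} Write $\theta_m=m\pi/n$ with $n=14$ or $30$. Then $c_m=e^{t_m}$ with $\sinh t_m=\cos\theta_m$. So $b_S=b_T$ is equivalent to
\[
\sum_{m\in S}\operatorname{arcsinh}(\cos\theta_m)=\sum_{m\in T}\operatorname{arcsinh}(\cos\theta_m).
\]
I would search all pairs of disjoint subsets numerically; there are only $3^6$ such pairs for $n=14$ and $3^{14}$ for $n=30$. This locates candidate relations, which we may take with $S_0\cap T_0=\emptyset$ after cancelling common factors. The search is routine.

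\emph{Exact verification (the main obstacle).} Each $c_m$ is a unit. It is a root of $X^2-2\cos\theta_m X-1$, and $1+\cos^2\theta_m=\frac{3+\cos 2\theta_m}{2}$, so $c_m$ lies in $K=\Q(\cos\frac{\pi}{n})\bigl(\sqrt{3+\cos 2\theta_m}: m\bigr)$. The quotient $u=b_{S_0}/b_{T_0}$ is therefore a unit in an explicit number field.

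To prove $u=1$, I would first try to find a structural reason. Squaring and regrouping, one could express both sides through the quadratic subfields of $K$ over $\Q(\zeta_{2n})^+$, and hope that the radicands $3+\cos 2\theta_m$ for $m\in S_0\cup T_0$ multiply to squares in the real cyclotomic field. That would collapse everything into one small field where the identity becomes a cyclotomic-unit relation checkable by hand.

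Failing that, I would give a certified computation. The degree $D$ of $u$ and a bound on all conjugates $|\sigma(u)|$ are explicit, since each $|\sigma(c_m)|\le 1+\sqrt2$. Hence $\prod_{\sigma}|\sigma(u)-1|$ is a rational integer bounded above in terms of these data. If $u\ne1$, this integer is at least $1$, which forces $|u-1|\ge (2+(1+\sqrt2)^{12})^{-(D-1)}$. A numerical evaluation of $u-1$ to a precision finer than this bound then proves $u=1$.

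An equivalent route is to compute the minimal polynomial of $b_{S_0}$ and $b_{T_0}$ by iterated resultants, and check that they share the relevant root. I expect this exact-verification step, especially for the larger $n=30$ identity, to be where essentially all the work lies.
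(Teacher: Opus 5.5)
Your reduction to $h=1$ is correct and is the same as the paper's. At $k=14h-1$ (resp.\ $30h-1$) the index $hm$ gives the same angle as the index $m$ at $k=13$ (resp.\ $29$). So a relation $b_{S_0}=b_{T_0}$ there becomes $b_{hS_0}=b_{hT_0}$, and Lemma~\ref{lem:criterion}(i) applies with no parity condition.

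The gap is the base case. You never exhibit $S_0,T_0$; you only say that a numerical search ``locates candidate relations.'' Nothing in your argument guarantees that the search returns anything. By Lemma~\ref{lem:criterion}(i), the existence of such a pair at $k=13$ (resp.\ $k=29$) is \emph{equivalent} to $Q_{13}$ (resp.\ $Q_{29}$) having a repeated root, i.e.\ to the theorem for $h=1$. So you reduce the statement to its $h=1$ case and then presuppose that case. Your certified-numerics step can only confirm a relation once you have one, and you have none. As written, this is a plan for a computation whose outcome is not reported, not a proof.

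The missing ingredient is exactly the pair of identities \eqref{eq:keyeq1} and \eqref{eq:keyeq2}. In your notation these are $S_0=\{2\}$, $T_0=\{4,6\}$ at $k=13$ and $S_0=\{7\}$, $T_0=\{10,13\}$ at $k=29$. Your scaling turns them into $b_{\{2h\}}=b_{\{4h,6h\}}$ and $b_{\{7h\}}=b_{\{10h,13h\}}$, which is precisely the paper's argument.

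Once the identities are written down, checking them is much easier than you fear, and your ``structural'' guess is right: every factor lies in a real cyclotomic field. Write $c(\theta)$ for the positive root of $X^2-2\cos\theta\,X-1$, and let $x=2\cos\frac{\pi}{7}$, a root of $x^3-x^2-2x+1$. Direct substitution gives $c(\frac{2\pi}{7})=x$, $c(\frac{3\pi}{7})=x^2-2$ and $c(\frac{\pi}{7})=x^2-1$. Then $x(x^2-2)=x^3-2x=x^2-1$, which is \eqref{eq:keyeq1}.

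For \eqref{eq:keyeq2}, note $c(\frac{\pi}{3})=\frac{1+\sqrt5}{2}$. Also $c(\frac{7\pi}{30})=2\cos\frac{\pi}{30}$ and $c(\frac{13\pi}{30})=4\cos\frac{\pi}{30}\cos\frac{2\pi}{5}$; both follow from $\cos\frac{\pi}{15}-\cos\frac{4\pi}{15}-\cos\frac{\pi}{5}=-\frac12$. So \eqref{eq:keyeq2} reduces to $(1+\sqrt5)\cos\frac{2\pi}{5}=1$, which holds since $\cos\frac{2\pi}{5}=\frac{\sqrt5-1}{4}$.
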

\begin{proof} 
By Lemma \ref{lem:criterion}, Theorem~\ref{thm:mainresult} follows immediately from the following two identities:
\begin{equation}\label{eq:keyeq1}
  \cos\frac{\pi}{7}+\sqrt{1+\cos^2\frac{\pi}{7}}
  =
  \left(\cos\frac{2\pi}{7}+\sqrt{1+\cos^2\frac{2\pi}{7}}\right)
  \left(\cos\frac{3\pi}{7}+\sqrt{1+\cos^2\frac{3\pi}{7}}\right),
\end{equation}
\begin{equation}\label{eq:keyeq2}
  \cos\frac{7\pi}{30}+\sqrt{1+\cos^2\frac{7\pi}{30}}
  =
  \left(\cos\frac{\pi}{3}+\sqrt{1+\cos^2\frac{\pi}{3}}\right)
  \left(\cos\frac{13\pi}{30}+\sqrt{1+\cos^2\frac{13\pi}{30}}\right).
\end{equation}
Indeed, if $k=14h-1$, then \eqref{eq:keyeq1} shows that $b_{\{2h\}}=b_{\{4h,6h\}}$; if $k=30h-1$ then \eqref{eq:keyeq2} shows that $b_{\{7h\}}=b_{\{10h,13h\}}$.
\end{proof}

\begin{remark}
Although $Q_k(x)$ is generally difficult to compute (the degree of $Q_{k}(x)$ is $2^{\lfloor(k+1)/2\rfloor}$), we can still use a computer algebra system to compute $Q_k(x)$ for some small $k$. 
One can check that the polynomials \(Q_k(x)\), \(1\le k\le 12\), all have distinct roots. Note that  $Q_1(x),\dots,Q_8(x)$ are computed by Klarner and Pollack in~\cite{KP80}. So, according to Theorem~\ref{thm:mainresult}, $k=13$ is the smallest counterexample to Conjecture~\ref{conj:Stanley}. In fact,

\[\begin{split}
  Q_{13}(x) & = \big(x^{16}-71 x^{14}+952 x^{12}-3976 x^{10}+6384 x^8-3976 x^6+952 x^4-71 x^2+1\big)^2 \\
  & \quad\  \cdot \big(x^{48}-1846 x^{46}+684333 x^{44}-88863671 x^{42}+5304620048x^{40}-165441761576 x^{38} \\
  & \quad\ +2911114622304 x^{36}-30365738521053 x^{34}+194344571749094 x^{32} \\ 
  & \quad\ -781085479259969 x^{30} +2002212789950035 x^{28}-3316160898776544x^{26} \\ 
  & \quad\ +3593291750966064 x^{24}-2571925079697792 x^{22} +1222383831824259 x^{20} \\
  & \quad\ -385896704246482 x^{18}+80456527547383 x^{16}-10928486271989 x^{14} \\
  & \quad\ +945014295568x^{12}-50346231208 x^{10}+1585650976 x^8-27716767 x^6+242450 x^4 \\ 
  & \quad\ -923 x^2+1\big) \cdot \big(x^{48}-923 x^{46}+242450 x^{44}-27716767 x^{42}+1585650976 x^{40} \\
  & \quad\ -50346231208x^{38}+945014295568 x^{36} -10928486271989 x^{34}+80456527547383 x^{32} \\
  & \quad\ -385896704246482 x^{30}+1222383831824259 x^{28} -2571925079697792 x^{26} \\
  & \quad\ +3593291750966064x^{24}-3316160898776544 x^{22}+2002212789950035 x^{20} \\
  & \quad\ -781085479259969 x^{18}+194344571749094 x^{16}-30365738521053 x^{14} \\
  & \quad\ +2911114622304 x^{12} -165441761576x^{10}+5304620048 x^8-88863671 x^6+684333 x^4 \\
  & \quad\ -1846 x^2+1\big).
\end{split}\]
We can also compute $P_{13}(x)$ by multiplying $Q_{13}(x)$ with the generating series of $\{A_{k,n}\}_{n\ge 0}$. It turns out that
\[\gcd(P_{13}(x),Q_{13}(x))=x^{16}-71 x^{14}+952 x^{12}-3976 x^{10}+6384 x^8-3976 x^6+952 x^4-71 x^2+1.\]
So, after cancellation, the denominator of $F_{13}(x)=P_{13}(x)/Q_{13}(x)$ indeed has distinct roots (see Remark~\ref{rem:Stanley}).
\end{remark}

In the last paragraph of~\cite{Sta85}, Stanley further reported a result of J.~Lagarias: $Q_k(x)$ is irreducible whenever $k+1$ is an odd prime different from $7$. However, Stanley did not include Lagarias's proof, nor did he cite a separate published account of it. For the sake of completeness, we provide a proof of this result in the following, no originality is claimed for the statement itself.

\begin{theorem}[Lagarias]\label{thm:irrQk}
If $k+1$ is an odd prime other than $7$, then Stanley's polynomial $Q_k(x)$ is irreducible over $\mathbb{Q}$.
\end{theorem}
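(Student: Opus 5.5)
Throughout, $p=k+1$ is an odd prime, so $k$ is even, $\ell=(p-1)/2$, $\deg Q_k=2^\ell$, and $Q_k(x)=\prod_S(1-a_Sx)$. Since $Q_k\in\Z[x]$, the plan is to show two things: the $2^\ell$ numbers $a_S$ are pairwise distinct, and they are all Galois conjugate over $\Q$. Then the minimal polynomial of $a_\emptyset^{-1}$ divides the reversed polynomial of $Q_k$ and has degree $2^\ell$, so $Q_k$ is irreducible.

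\textbf{Setting up the fields.} Let $K=\Q(\cos\frac{\pi}{p})$. The numbers $\cos\frac{j\pi}{p}$, $1\le j\le\ell$, lie in $K=\Q(\zeta_p)^+$, which is cyclic of degree $\ell$ over $\Q$. Put
\[
d_j=1+\cos^2\frac{j\pi}{p}=\frac{3+\cos\frac{2j\pi}{p}}{2}.
\]
As $j$ runs over $1,\dots,\ell$, the values $\cos\frac{2j\pi}{p}$ run over all Galois conjugates of $\cos\frac{2\pi}{p}$. Hence the $d_j$ are exactly the $\ell$ conjugates of $d=d_1$ in $K$. Set $L=K(\sqrt{d_1},\dots,\sqrt{d_\ell})$, which is Galois over $\Q$.

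\textbf{Key step: square-class independence.} I will show that the classes of $d_1,\dots,d_\ell$ in $K^\times/K^{\times2}$ are $\mathbb{F}_2$-linearly independent. By Kummer theory this gives $\mathrm{Gal}(L/K)\cong(\Z/2)^\ell$, where an arbitrary subset $U$ of the square roots can be negated. Negating $\sqrt{d_j}$ swaps $c_j$ and $\bar c_j$, so $\mathrm{Gal}(L/K)$ acts transitively on $\{a_S\}$; in particular every $a_S$ is a conjugate of $a_\emptyset$.

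To prove independence I will use a valuation argument. Write $\eta=\zeta_p+\zeta_p^{-1}$, so $4d=6+\eta$, and
\[
N_{K/\Q}(6+\eta)=\prod_{j=1}^{\ell}\bigl(6+2\cos\tfrac{2j\pi}{p}\bigr),
\]
which is, up to sign, a Lucas/Pell-type quantity attached to the roots $-3\pm2\sqrt2$ of $x^2+6x+1$. By the primitive divisor theorem (Bilu--Hanrot--Voutier, or Carmichael for this Lucas sequence), I expect a prime $q\equiv\pm1\pmod p$ dividing this norm. Such a $q$ splits completely in $K$, and one prime $\mathfrak q\mid q$ divides exactly one conjugate $d_j$. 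If $v_{\mathfrak q}(d)$ is odd, the odd valuations at the distinct conjugate primes $\sigma(\mathfrak q)$ separate the $d_j$ modulo squares, which gives independence. The prime $2$ does not interfere, because $q$ is odd.

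\textbf{Main obstacle.} The hard point is guaranteeing a primitive prime $q$ that occurs to an \emph{odd} power. If every primitive prime occurred to an even power, one could get an exceptional square relation; this is precisely the phenomenon behind the identity~\eqref{eq:keyeq1} at $p=7$. I would first try to show the primitive part of the Pell number is not a square (a Robinson-type statement) for $p\neq7$. If that is out of reach, I would instead use the $\mathbb{F}_2[\mathrm{Gal}(K/\Q)]$-module structure: the span of the classes of the $d_j$ is a cyclic module, so it suffices to show that $d$ is not a square in any subfield of $K$ after applying each idempotent. A finite computation handles small $p$.

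\textbf{Distinctness.} Suppose $a_U=a_\emptyset$ for some nonempty $U$, with $U$ the set of indices where $c_j$ replaces $\bar c_j$. Since $c_j\bar c_j=-1$, this says $\prod_{j\in U}c_j^2=(-1)^{|U|}$. Apply the automorphism negating only $\sqrt{d_i}$ for some $i\in U$; this replaces $c_i^2$ by $\bar c_i^{\,2}=c_i^{-2}$ and leaves the right side unchanged. Dividing the two equations gives $c_i^4=1$, so $c_i=1$ since $c_i>0$. That forces $\cos\frac{i\pi}{p}=0$, which is impossible for odd $p$. Hence the $a_S$ are distinct, and $Q_k$ is irreducible.
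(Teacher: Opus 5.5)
Your framework is the paper's: Kummer theory over $K=\Q(\cos\frac{\pi}{p})$, reduction to $\mathbb{F}_2$-independence of the square classes of the conjugates $d_j$, and detecting independence by a prime of odd valuation dividing the norm of $6+\eta$. Your distinctness paragraph is correct, and it makes explicit a point the paper leaves tacit. But there is a genuine gap at the step you call the main obstacle, which is the only non-formal input. Since $N_{K/\Q}(6+\eta)$ is exactly the Pell number $u_p$, a prime $\mathfrak{q}$ with $v_{\mathfrak{q}}(6+\eta)$ odd exists if and only if $u_p$ is not a perfect square, and you never prove this.

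Your fallback does not fill the hole. Viewing the span of the classes as a cyclic $\mathbb{F}_2[\Gal(K/\Q)]$-module only restates the independence question. Moreover, this group ring is not semisimple when $\ell$ is even, so ``applying each idempotent'' does not decompose it. A finite computation cannot cover infinitely many primes. The primitive divisor theorem is also unnecessary: because $u_1=1$, every prime factor of $u_p$ is automatically primitive, and $u_p>1$.

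The missing idea is that for prime $p$ this is not an open Robinson-type question. The primitive part of $u_p$ is all of $u_p$, and square Pell numbers are classified. Put $r_p=\frac{(1+\sqrt2)^p+(1-\sqrt2)^p}{2}$. Then $r_p^2-2u_p^2=(-1)^p=-1$, so $u_p=y^2$ would give $r_p^2+1=2y^4$. By Ljunggren's theorem, the only positive solutions of $X^2+1=2Y^4$ are $(1,1)$ and $(239,13)$. This forces $u_p\in\{1,169\}=\{u_1,u_7\}$, i.e.\ $p=7$.

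Once $u_p$ is known not to be a square, you need neither primitive divisors nor complete splitting of $q$: if every $v_{\mathfrak{q}}(6+\eta)$ were even, the norm would be a square. Finally, your claim that $\mathfrak{q}$ divides only one conjugate $d_j$ needs its one-line justification. The element $4(d_i-d_j)=(\zeta^i-\zeta^j)(1-\zeta^{-i-j})$ is supported only above $p$, while $q\neq p$. This pairwise coprimality is exactly what makes the odd valuation of a single $d_i$ survive in any product $\prod_j d_j^{e_j}$.
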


\begin{proof}
Let $p=k+1,\ell=\lfloor\frac{k}{2}\rfloor,K=\Q(\cos\frac{\pi}{p})$, and $M$ be the splitting field of $Q_k(x)$ over $\Q$. Recall that $Q_k(x)=\prod_{S\subseteq \{1,\ldots,\ell\}}(1-a_S x)$, where $a_S$ is given by~\eqref{eq:aS}. Since
\[
\frac{a_{\{1,\dots,\ell\}}}{a_{\{2,\dots,\ell\}}}+\frac{a_{\{2,\dots,\ell\}}}{a_{\{1,\dots,\ell\}}} =\frac{\cos\frac{\pi}{p}+\sqrt{1+\cos^2\frac{\pi}{p}}}{\cos\frac{\pi}{p}-\sqrt{1+\cos^2\frac{\pi}{p}}}+\frac{\cos\frac{\pi}{p}-\sqrt{1+\cos^2\frac{\pi}{p}}}{\cos\frac{\pi}{p}+\sqrt{1+\cos^2\frac{\pi}{p}}} = -4-2\cos\frac{2\pi}{p},
\]
we have $K=\Q(\cos\frac{2\pi}{p})\subseteq M\subseteq L:=K(\sqrt{d_1},\dots,\sqrt{d_\ell})$, where $d_j=4(1+\cos^2\frac{j\pi}{p}),1\le j\le\ell$. To prove that $Q_k(x)$ is irreducible over $\Q$, it suffices to prove that $\Gal(L/K)$ acts transitively on $\{a_S\}_{S\subseteq\{1,\dots,\ell\}}$, which is equivalent to proving that $[L:K]=2^\ell$. Since if $[L:K]=2^\ell$, then for each subset $S$ of $\{1,\dots,\ell\}$, there exists $\sigma_S\in\Gal(L/K)$ such that
\[\sigma_S(\sqrt{d_j})=\begin{cases} \sqrt{d_j}, &\text{if }j\in S,\\ -\sqrt{d_j} &\text{if }j\in\{1,\dots,\ell\}\setminus S, \end{cases}\]
we clearly have $\sigma(a_{\{1,\dots,\ell\}})=a_S$. According to elementary Kummer theory, proving that $[L:K]=2^\ell$ is equivalent to proving that if there exist integers $e_1,\dots,e_\ell$ such that
\begin{equation}\label{eq:product_square}
\prod_{j=1}^{\ell}d_j^{e_j}\in K^{\times 2},
\end{equation}
then we have $e_j\equiv 0\pmod 2$ for each $j=1,\dots,\ell$. In the following, we will prove the latter.

Let $\zeta=e^{\frac{2\pi i}{p}}$. We have $K=\Q(\zeta+\zeta^{-1})$,
\[d_j=6+\zeta^j+\zeta^{-j}=\zeta^{-j}(\zeta^j+3+2\sqrt{2})(\zeta^j+3-2\sqrt{2}).\]
Thus,
\[\begin{split}N_{K/\Q}(d_j)^2 & = \prod_{j=1}^{p-1}(6+\zeta^j+\zeta^{-j}) \\ & = \prod_{j=1}^{p-1} (\zeta^j+(3+2\sqrt{2}))\prod_{j=1}^{p-1} (\zeta^j+(3-2\sqrt{2})) \\ & = \left(\frac{(1+\sqrt2)^p-(1-\sqrt2)^p}{2\sqrt2}\right)^2.\end{split}\]
Since the $p$-th Pell number $u_p=\frac{(1+\sqrt2)^p-(1-\sqrt2)^p}{2\sqrt2}$ is not divisible by $p$, we must have $p\nmid N_{K/\Q}(d_j)$. This non-divisibility condition immediately implies that the ideals $(d_1),\dots,(d_\ell)$ are pairwise coprime in $\mathcal{O}_K$. Indeed, suppose that a prime ideal $\mathfrak{q}$ of $\mathcal{O}_K$ divides both $(d_i)$ and $(d_j)$ for some distinct $i,j\in\{1,\dots,\ell\}$, then it must also divide the ideal
\[(d_i-d_j)=(\zeta^i+\zeta^{-i}-\zeta^j-\zeta^{-j})=(1-\zeta^{i-j})(1-\zeta^{i+j})=\mathfrak{p}^2,\]
where $\mathfrak{p}=(1-\zeta)$ is the only prime ideal of $\mathcal{O}_K$ lying above $(p)$, then we have $\mathfrak{q}=\mathfrak{p}$ and $\mathfrak{p}\mid(d_j)$, this implies $p\mid N_{K/\Q}(d_j)$, a contradiction!

We also need the fact that $u_p$ is not a square. Let $r_n=\frac{(1+\sqrt2)^n+(1-\sqrt2)^n}{2}$. Suppose that $u_p=y^2$. Then $r_p^2-2y^4=r_p^2-2u_p^2=-1$, namely $r_p^2+1=2y^4$. Note that the only positive integer solutions to Ljunggren's equation
\[X^2+1=2Y^4\]
are $(X,Y)=(1,1)$ and $(239,13)$ (see, for example~\cites{Lju42, Che94, Dra07}), we have $u_p=1$ or $169$. Since the Pell numbers $u_n$ are strictly increasing for $n\ge 1$ and $u_7=169$, we know that for our $p\neq7,u_p$ is never a square. 

Finally, assume that there exist $e_1,\dots,e_\ell\in\{0,1\}$ such that~\eqref{eq:product_square} holds. If $e_i=1$ for some $i$, then because $N_{K/\mathbb Q}(d_i)=u_p$ is not a square, the ideal $(d_i)$ has some prime ideal divisor
\(\mathfrak q\) with odd valuation: $v_{\mathfrak q}(d_i)\equiv1\pmod2$. Since the ideals $(d_1),\dots,(d_\ell)$ are pairwise coprime, $\mathfrak q$ divides no $(d_j)$ with $j\ne i$. Hence
\[
        v_{\mathfrak q}\left(\prod_{j=1}^{\ell}d_j^{e_j}\right)
        =
        v_{\mathfrak q}(d_i)
        \equiv1\pmod2,
\]
which is impossible for a square in $K^\times$. Therefore, we have $e_j=0$ for all $j$.
\end{proof}

\begin{remark}
If $k+1$ is neither prime nor a multiple of $7$ or $30$, $Q_k(x)$ may be irreducible, for example
\[
\begin{split}
Q_{8}(x) & = x^{16}-x^{15}-76 x^{14}-69 x^{13}+921 x^{12}+584 x^{11}-4019 x^{10}-829 x^9 \\ & \quad +7012 x^8 -829 x^7-4019 x^6+584 x^5+921 x^4-69x^3-76 x^2-x+1;
\end{split}
\]
or reducible, for example
\[\begin{split}Q_7(x) & = (x^8-2 x^7-26 x^6-22 x^5+42 x^4+22 x^3-26 x^2+2 x+1) \\ & \quad  \cdot (x^8+2 x^7-26 x^6+22 x^5+42 x^4-22 x^3-26 x^2-2x+1).\end{split}\]
\end{remark}

\section{Further discussions} \label{sec:finalremarks}

The reader may observe that the counterexamples constructed in Theorem~\ref{thm:mainresult} rely heavily on the elementary multiplicative identities \eqref{eq:keyeq1} and \eqref{eq:keyeq2}. It is not hard to see that, according to Lemma~\ref{lem:criterion}, the question of classifying all $k$ such that $Q_k(x)$ has repeated roots is equivalent to the following question:

\begin{question}\label{question:classification}
For which integers $k\geq 1$ do there exist distinct subsets $S,T\subseteq \{1,\dots,\lfloor k/2\rfloor\}$ such that
\begin{equation}\label{eq:subsetproduct}
  \prod_{j\in S}\left(\cos\frac{j\pi}{k+1}+\sqrt{1+\cos^2\frac{j\pi}{k+1}}\right)
  =
  \prod_{j\in T}\left(\cos\frac{j\pi}{k+1}+\sqrt{1+\cos^2\frac{j\pi}{k+1}}\right)?
\end{equation}
When $k$ is even, which of these identities have $|S|\equiv |T|\pmod 2$?
\end{question}

Based on our numerical experiments, for $k\le 50$, essentially, there are only $4$ formulas of the form~\eqref{eq:subsetproduct}, they are: \eqref{eq:keyeq1}, \eqref{eq:keyeq2}, and
\begin{equation}\label{eq:keyeq3}
  \cos\frac{\pi}{30}+\sqrt{1+\cos^2\frac{\pi}{30}}
  =
  \left(\cos\frac{\pi}{3}+\sqrt{1+\cos^2\frac{\pi}{3}}\right)
  \left(\cos\frac{11\pi}{30}+\sqrt{1+\cos^2\frac{11\pi}{30}}\right),
\end{equation}
\begin{equation}\label{eq:keyeq4}
   \begin{split}
   \left(\cos\frac{\pi}{30}+\sqrt{1+\cos^2\frac{\pi}{30}}\right) & \left(\cos\frac{13\pi}{30}+\sqrt{1+\cos^2\frac{13\pi}{30}}\right) \\
  & =
  \left(\cos\frac{7\pi}{30}+\sqrt{1+\cos^2\frac{7\pi}{30}}\right)
  \left(\cos\frac{11\pi}{30}+\sqrt{1+\cos^2\frac{11\pi}{30}}\right)
\end{split}
\end{equation}
(note that any two of \eqref{eq:keyeq2}, \eqref{eq:keyeq3}, and \eqref{eq:keyeq4} imply the third). This suggests the following conjecture:
\begin{conjecture}\label{conj:tri_iden}
Let $k\ge 1$. If there exist distinct subsets
$S,T\subseteq \{1,\dots,\lfloor k/2\rfloor\}$
for which~\eqref{eq:subsetproduct} holds, then either $7\mid k+1$ or $30\mid k+1$. Moreover, if $S\cap T=\varnothing$ and there do not exist subsets $S'$ of $S$ and $T'$ of $T$ such that $\varnothing\neq S'\cup T'\neq S\cup T$ and
\[
\prod_{j\in S'}\left(\cos\frac{j\pi}{k+1}+\sqrt{1+\cos^2\frac{j\pi}{k+1}}\right)
=
\prod_{j\in T'}\left(\cos\frac{j\pi}{k+1}+\sqrt{1+\cos^2\frac{j\pi}{k+1}}\right),
\]
then the identity must be one of
\eqref{eq:keyeq1}, \eqref{eq:keyeq2}, \eqref{eq:keyeq3}, and \eqref{eq:keyeq4}.

\end{conjecture}
The above conjecture clearly implies:
\begin{conjecture}
\label{conj:revised_conj}
Stanley's polynomial \(Q_k(x)\) has repeated roots if and
only if
\[
   k=14h-1
   \qquad\text{or}\qquad
   k=30h-1
\]
for some positive integer \(h\).
\end{conjecture}

It is interesting that the same two integers, $7$ and $30$, also occur in a classical  open problem in number theory. Let
\[L_n=\prod_{\substack{1\le k\le n-1 \\ (n,k)=1}}\big((1+\sqrt{2})-(1-\sqrt{2})e^{\frac{2\pi ik}{n}}\big)\]
be the Sylvester-Pell cyclotomic sequence. It satisfies $u_n=\prod_{d\mid n}L_d$, where $u_n=\frac{(1+\sqrt2)^n-(1-\sqrt2)^n}{2\sqrt2}$ is the Pell sequence used in the proof of Theorem~\ref{thm:irrQk}, so $L_n$ is also called ``the primitive parts" of Pell numbers. R. K. Guy reports that R. Robinson observed that $L_7=13^2$ and $L_{30}=31^2$ and asked whether there are any further square values of $L_n$; see~\cite{Guy94}*{p.18}. There are no others in the first $10000$ terms; see~\cite{OEIS_A008555}. We will explain a connection between Robinson's conjecture and Conjecture~\ref{conj:tri_iden} in the following.

For $n\geq 2$ and $1\leq j<n$, define $c_n(j) = \cos\frac{j\pi}{n} + \sqrt{1+\cos^2\frac{j\pi}{n}}$. Then we have $L_n=R_n^2$, where
\[R_n = \prod_{\substack{1\leq k <n/2\\ (k,n)=1}}\left(c_n(k)+c_n(k)^{-1}\right).\]
So $L_n$ is a perfect square if and only if $R_n$ is an integer. Note that the exceptional identities \eqref{eq:keyeq1}, \eqref{eq:keyeq2}, \eqref{eq:keyeq3} and \eqref{eq:keyeq4} are precisely $c_7(1)=c_7(2)c_7(3), c_{30}(7)=c_{30}(10)c_{30}(13),c_{30}(1)=c_{30}(10)c_{30}(11)$ and $c_{30}(1)c_{30}(13)=c_{30}(7)c_{30}(11)$,  respectively. These relations are important for computing the following two identities:
\[
\begin{split}
R_7 & = \prod_{k=1}^{3} \left(c_7(k)+c_7(k)^{-1}\right) = 13, \\
R_{30} & = \prod_{k\in\{1,7,11,13\}} \left(c_{30}(k)+c_{30}(k)^{-1}\right) = 31.
\end{split}
\]

\section*{Acknowledgements}
The first author is supported by NSFC 12231009. The second author is supported by the Natural Science Foundation of Anhui Province (Grant No. 2508085QA017).


\end{document}